\begin{document}

\newtheorem{theorem}{Theorem}[section]
\newtheorem{conjecture}[theorem]{Conjecture}
\newtheorem{corollary}[theorem]{Corollary}
\newtheorem{definition}[theorem]{Definition}
\newtheorem{lemma}[theorem]{Lemma}
\newtheorem{proposition}[theorem]{Proposition}
\newtheorem{remark}[theorem]{Remark}

   \newtheorem{thm}{Theorem}[section]
   \newtheorem{cor}[thm]{Corollary}
   \newtheorem{lem}[thm]{Lemma}
   \newtheorem{prop}[thm]{Proposition}
   \newtheorem{defi}[thm]{Definition}
   \newtheorem{rem}[thm]{Remark}
   \newtheorem{example}[thm]{Example}

\newcommand{\be}{\begin{equation}} \newcommand{\ee}{\end{equation}}
\newcommand{\beq}{\begin{eqnarray}}
\newcommand{\eeq}{\end{eqnarray}}
\newcommand{\beqy}{\begin{eqnarray*}}
\newcommand{\eeqy}{\end{eqnarray*}}
\newcommand{\parag}{\medskip\noindent} \newcommand{\ov}{\overline}
\newcommand{\bl}{\begin{lem}} \newcommand{\el}{\end{lem}}

\newcommand{\bt}{\begin{theorem}} \newcommand{\et}{\end{theorem}}
\newcommand{\bc}{\begin{cor}} \newcommand{\ec}{\end{cor}}
\newcommand{\bit}{\begin{itemize}} \newcommand{\eit}{\end{itemize}}
\newcommand{\bin}{\begin{enumerate}}
\newcommand{\ein}{\end{enumerate}}

\newcommand{\capa}{{\rm cap}}

\newcommand{\Lip}{{\rm Lip}} \newcommand{\supp}{{\rm supp \;}}
\newcommand{\sign}{{\rm \, sign \;}} \newcommand{\R}{\mathbb{R}}
\newcommand{\N}{\mathbb{N}} \newcommand{\Z}{\mathbb{Z}}
\newcommand{\T}{\mathbb{T}} \newcommand{\Q}{\mathbb{Q}}
\newcommand{\C}{\mathbb{C}} \newcommand{\loc}{\text{loc}}
\newcommand{\weak}{{(\rm w)}}
\newcommand{\norm}[1]{\left\Vert#1\right\Vert}
\newcommand{\brkt}[1]{\left(#1\right)}
\newcommand{\abs}[1]{\left|#1\right|}
\newcommand{\set}[1]{\left\{#1\right\}}
\newcommand{\esc}[1]{\langle{#1}\rangle}

\def\R{{\mathbb{R}}} \def\oo{{\infty}} \def\a{{\alpha}}
\def\b{{\beta}} \def\g{{\gamma}} \def\d{{\delta}}
\def\e{{\varepsilon}} \def\z{{\zeta}} \def\th{{\theta}}
\def\l{{\Pi}} \def\r{{\varrho}} \def\s{{\sigma}} \def\t{{\tau}}
\def\f{{\varphi}} \def\p{{\psi}} \def\w{{\omega}} \def\G{{\Gamma}}
\def\D{{\Delta}} \def\Th{{\Theta}} \def\L{{\Pi}}
\def\S{{\Sigma}} \def\F{{\Phi}} \def\O{{\Omega}} \def\P{{\Psi}}
\def\na{{\nabla}} \def\Cap{{\rm Cap}}

\def\Nat{{\mathbb{N}}}

\def\sop{{\rm\,\mbox{\rm supp}\,}} \def\sgn{{\rm\mbox{\rm sgn}}}
\def\loc{\rm loc} \def\osc{{\rm osc}} \def\dim{{\rm\, \mbox{\rm
dim}\,}} \def\Per{{\rm \mbox{\rm Per}}}



\title{Factorizations of weighted Hardy inequalities}
\author{Sorina Barza, Anca N. Marcoci, Liviu G. Marcoci}
\address{Department of Mathematics and Computer Science, Karlstad University, SE-65188 Karlstad, SWEDEN {\em E-mail: sorina.barza@kau.se}}
\address{Department of Mathematics and Computer Science, Technical University of Civil Engineering Bucharest, RO-020396 Bucharest, ROMANIA
	{\em E-mail: li\-viu.marcoci@utcb.ro}}
\address{Department of Mathematics and Computer Science, Technical University of Civil Engineering Bucharest, RO-020396 Bucharest, ROMANIA
	{\em E-mail: an\-ca.marcoci@utcb.ro}}

\keywords{factorization of function spaces, Hardy averaging integral operator, Ces\` aro
spaces, Copson spaces}

\subjclass[2010]{Primary 26D20; Secondary 46B25}

\begin{abstract}
	We present factorizations of weighted Lebesgue, Ce\-s\` aro and Copson spaces, for weights satisfying the conditions which assure the boundedness of the Hardy's integral operator between weighted Lebesgue spaces. 
	Our results enhance, among other,  the best known forms of weighted Hardy inequalities. 
\end{abstract}

\maketitle

\section{Introduction}

Let $p>1$. The classical Hardy inequalities \be
\left(\sum_{n=1}^{\infty}\left(\frac{1}{n}\sum_{k=1}^{n} |a_k|\right)^p\right)^{1/p}\le
\frac{p}{p-1} \left(\sum_{n=1}^{\infty} |a_n|^p\right)^{1/p}\ee and
\be
\left(\int_{0}^{\infty}\left(\frac{1}{x}\int_{0}^{x}|f(t)|dt\right)^pdx\right)^{1/p}\le
\frac{p}{p-1} \left(\int_{0}^{\infty}| f(t)|^p\right)^{1/p}\ee (see
e.g.\cite{KMP}) can be interpreted as inclusions between the
Lebesgue space and Ces\` aro space of sequences (respectively
functions).
The Ces\` aro space of sequences is defined to be the set of all real sequences $a=(a_n)_{n\ge 1}$ that satisfy
$$
\|a\|_{\text{ces}(p)}=\left(\sum_{n=1}^{\infty}\left(\frac{1}{n}\sum_{k=1}^{n} |a_k|\right)^p\right)^{1/p}<\infty
$$
and  the Ces\` aro space of functions  is defined to be the set of all Lebesgue measurable real functions on $[0,\infty)$ such that
$$
\|f\|_{\text{Ces}(p)}=\left(\int_{0}^{\infty}\left(\frac{1}{x}\int_{0}^{x}|f(t)|dt\right)^pdx\right)^{1/p}<\infty.
$$

The same
interpretation is valid if the Hardy operator is substituted by
its dual. In his celebrated book \cite{B}, G. Bennet
''enhanced'' the classical Hardy inequality by substituting it
with an equality, factorizing the Ces\` aro space of sequences, with the final aim to characterize its K\"{o}the dual. He proved that a sequence $x$
belongs to the Ces\` aro space of sequences $ces(p)$ if and only if
it admits a factorization $x=y\cdot z$ with $y\in l^p$ and
$z_1^{p'}+\ldots z_n^{p'}=O(n)$, where $p'=\frac{p}{p-1}$ is
the conjugate index of $p$. This factorization  gives also a better insight in the structure of Ces\` aro spaces. 
The answer to the question of findind the dual space of the Ces\` aro space was given for the first time by A. Jagers \cite{J}.  This problem was posed by the Dutch Academy of Sciences.
His conditions are nevertheless difficult to check. By means of factorizations a new isometric characterization, an alternative description of the dual Ces\` aro space of sequences was given by G. Bennett in \cite{B}.

In the case of functions, the same factorization results as well as the dual space of Ces\` aro space are only  mentioned in
\cite[Ch. 20]{B}, for the unweighted spaces. A factorization result for the unweighted Ces\` aro function spaces was proved in \cite{AM} 
where also an isomorphic description of the dual space of the Ces\` aro space of functions was given. An isometric description for the general weighted case,
in the spirit of A. Jagers was given in \cite{K}. Although the characterization is given for general weights, the condition is difficult to checked. 
The results of \cite{B} have a big
impact in many parts of analysis but it seems that the
corresponding results for weighted spaces are less studied. However, in \cite{CH} can be found the following factorization result which is a weighted integral analogue of a result obtained by G. Bennett in \cite{B} for the discrete Hardy operator in the unweighted case.
\bt[\cite{CH}]
Let $1<p<\infty$ and $w$, $v$ two weights such that $w>0$, $v>0$ a.e. and assume that $h$ is a non-negative function on $[0,\infty)$.
Then the function $h$ belongs to $Ces_p(w)$ if and only if it admits a
factorization $h=f \cdot g$,  $f\ge 0$, $g>0$ on $[0,\infty),$ with $f\in L^p(v)$ and $g$ such that 
$$
\|g\|_{w,v}=\sup_{t>0}\left(\int_t^{\infty}\frac{w(s)}{s^p}ds\right)^{1/p}\left(\int_0^{t}v^{1-p'}(s)g^{p'}(s)ds\right)^{1{/p'}}<\infty.
$$
Moreover
$$
\inf{\|f\|_{p,v} \|g\|_{w,v}} \le \|h\|_{\text{Ces}_p(w)}\le
2(p')^{1/{p'}}p^{1/p}\inf{\|f\|_{p,v}\|g\|_{w,v}},
$$
where the infimum is taken over all possible factorizations.  
\et

	Throughout this paper, we use standard notations and conventions. The letters $u$, $v$, $w$,..., are used for weight functions which are positive a.e. and locally integrable on $(0,\infty)$. The function $f$
	is real-valued and Lebesgue measurable on $(0,\infty)$. Also for a given weight $v$ we write $V(t)=\int_0^t v(s)ds$, $0\le t<\infty$.
	By $\chi_A$ we denote the
characteristic function of the measurable set $A$.

Observe that  the best known form of the Hardy inequality does not follow from the above result. 
The aim of this paper is to prove  factorization
results of the same type for the weighted Lebesgue, Ces\` aro and Copson spaces
of functions, which enhance in the same manner the weighted Hardy inequality. The weights satisfy the natural conditions which assure the boundedness of the Hardy, 
respectively the dual Hardy operators as well as some reversed conditions. 

 We denote by $P$  the Hardy operator and by $Q$ its
adjoint  \be\label{hardyop} Pf(t)=\frac{1}{t}\int_0^tf(x)dx; \quad
Qf(t)=\int_t^{\infty}\frac{f(x)}{x}dx, (t>0). \ee 
For $p\ge 1$, it is known that $P$ is bounded on the weighted
Lebesgue space $L^p(w)$ if and only if $w\in M_p$ (see \cite{M}), where $M_p$ is the
class of weights for which there exists  a constant $C>0$ such that, for all
$t>0$ it holds \be\label{mp} M_p:
\left(\int_t^{\infty}\frac{v(x)}{x^p}dx\right)^{1/p}\left(\int_0^{t}v^{1-p'}(x)dx\right)^{1/p'}\le
C. \ee

The least constant satisfying the condition $M_p$ will be denoted
 by $[v]_{M_p}$. Similarly, we denote by $m_p$ the class of weights
satisfying the reverse inequality and by $[v]_{m_p}$ the biggest
constant for which the reverse inequality holds.

 The Hardy operator $P$ is
	bounded on $L^1(v)$ if and only if there exists $C>0$, such that
	\be\label{ineq171} M_1: \int_t^{\infty}\frac{v(x)}{x}dx\le C v(t).
	\ee
	
	\noindent We denote by $[v]_{M_1}$ the least constant for which
	the above inequality is satisfied. Similarly, $[v]_{m_1}$ is the
	biggest constant for which the reverse inequality of
	(\ref{ineq171}) is satisfied.

The corresponding condition for the boundedness of the adjoint operator $Q$ on $L^p(v)$ (see \cite{M}) is given by
\begin{equation}\label{mpstar}
M^*_p:
\left(\int_0^{t}v(x)dx\right)^{1/p}\left(\int_t^{\infty}\frac{v^{1-p'}(x)}{x^{p'}}dx\right)^{1/p'}\le
C.
\end{equation}
The least constant satisfying   the $M^*_p$ condition will be denoted
$[w]_{M^*_p}$. Similarly, we denote by $m_p^*$ the class of weights
satisfying the reverse inequality and by $[w]_{m_p^*}$ the biggest
constant for which the reverse inequality holds.

The dual Hardy operator $Q$, (defined by (\ref{hardyop})) is
bounded on $L^1(v)$ if and only if there exists $C>0$, such that
\be\label{ineq17} M_1^*: \frac{1}{t}\int_0^{t}v(x)dx\le C v(t).
\ee
We denote by $[v]_{M_1^*}$ the least constant for which
the above inequality is satisfied. Similarly, $[v]_{m_1^*}$ is the
biggest constant for which the reverse inequality of
(\ref{ineq17}) is satisfied. 

In Section 2 we prove a factorization result for the weighted Lebesgue spaces $L^p(v)$. This result is a natural extension of Theorem 3.8 from \cite{B}.

In Section 3 we present some factorization theorems for the weighted
Ces\` aro spaces in terms of weighted Lebesgue spaces and the spaces $G_p(v)$,
for $p>1$. We treat  separately  the case $p=1$ which appears to be new.  Moreover, our study
is motivated by similar factorization results established by G.
Bennett \cite{B}, in the unweighted case, for
spaces of sequences. As a consequence we recover the best known form of the Hardy inequality for weighted Lebesgue spaces.

We also present the optimal result for the power weights. Section 4  is
devoted to the same problems but for Copson spaces. 

\section{The spaces $D_p(v)$ and $G_p(v)$}

For $p>0$, the function spaces  $G_p(v)$ and $D_p(v)$ are defined by
\be\label{gp}
 G_p(v)=\{f :\sup_{t>0}\left(\frac{1}{V(t)}\int_0^t|f(x)|^pv(x)dx\right)^{1/p}<\infty\}
\ee
 and
\be\label{dp} D_p(v)=\{f : \left(\int_0^{\infty}\text{esssup}_{t\ge
x}|f(t)|^pv(x)dx\right)^{1/p} <\infty \}. \ee

These spaces are Banach spaces, for $p\ge 1$,
endowed with the norms \be
\|f\|_{G_p(v)}=\sup_{t>0}\left(\frac{1}{V(t)}\int_0^t|f(x)|^pv(x)dx\right)^{1/p},\ee
respectively \be \|f\|_{D_p(v)}=\left(\int_0^{\infty}\text{esssup}_{t\ge
x}|f(t)|^pv(x)dx\right)^{1/p}.\nonumber \ee

We denote by $\widehat{f}(x)={\text{essup}}_{t\ge x}|f(t)|$ the least
decreasing majorant of the absolute value of the function $f$. Obviously, the
function $f\in D_p(v)$ if and only if $\widehat{f}\in L_p(v)$ and
that $\|f\|_{D_p(v)}=\|\widehat{f}\|_{L_p(v)}$. 

In what follows we need the following two lemmas.
\bl[Hardy's lemma]\label{Hardy} Let $f,g$ be two nonnegative
real-valued functions and $h$ be a nonnegative decreasing
function. If
$$
\int_0^tf(x)dx\le \int_0^tg(x)dx, \quad \text{for any } t>0
$$
then
$$
\int_0^\infty f(x)h(x)dx\le \int_0^\infty g(x)h(x)dx.
$$
 \el
\begin{proof}
See \cite[Proposition 3.6]{BS}.
\end{proof}

\bl\label{level} Let $h$ be a nonnegative measurable function on
$(0,\infty),$ such that
$$
\lim_{x\rightarrow \infty}\frac{\int_0^xh(t)v(t)dt}{\int_0^x
v(t)dt}=0.
$$
Then there exists a nonnegative decreasing function $h^{\circ}$  on $(0,\infty)$, called the
level function of $h$  with respect to the measure $v(x)dx$ satisfying the following conditions:
 \begin{enumerate}
    \item $\int_0^x h(t)v(t)dt\le \int_0^x h^{\circ}(t)v(t)dt;$
    \item up to a set of measure zero, the set $\{x:h(x)\ne
    h^{\circ}(x)\}=\cup_{k=1}^{\infty}I_k$, where $I_k$ are bounded disjoint intervals such that
    $$
\int_{I_k}h^{\circ}(t)v(t)dt=\int_{I_k}h(t)v(t)dt
    $$
    and $h^{\circ}$ is constant on $I_k$, i.e. $h^{\circ}(t)=\frac{\int_{I_k}hv}{\int_{I_k}v}.$
\end{enumerate}
\el
\begin{proof}
For a proof see e.g. \cite{BKS} or \cite{S}. 
\end{proof}
We are now ready to prove the main theorem of this section which contains one of the possible factorizations of weighted Lebesgue spaces.  

\bt If $0<p\le\infty$, then
 a function $h\in L^p(v)$ if and only if $f$ admits a factorization $h=f\cdot g$ such that $f\in D_p(v) $ and $g\in G_p(v)$.
Moreover,
$$
\|h\|_{L^p(v)}=\inf\{ \|f\|_{D_p(v)}\|g\|_{G_p(v)}\},
$$ where the infimum is taken over all possible factorizations $h=f\cdot g$ with $f\in D_p(v) $ and $g\in G_p(v)$. \et
\begin{proof}
The case $p=\infty$ is trivial. By homogeneity, it is sufficient
to prove the theorem for $p=1$. 

We first prove that $D_1(v)\cdot
G_1(v) \subseteq L^1{(v)}$. Suppose that $h$ admits a factorization
$h=f\cdot g$ with $f\in D_1(v)$, $g\in G_1(v)$. Then $$
\|h\|_{L^1(v)}=\int_0^\infty |f(x)g(x)|v(x)dx\le
\int_0^{\infty}\widehat{f}(x)|g(x)|v(x)dx. $$
From definition
(\ref{gp}) we have the inequality
$$
\int_0^t |g(x)|v(x)dx\le \|g\|_{G_1(v)}\int_0^t v(x)dx, \quad \text{for any } t>0
$$
which together with Lemma \ref{Hardy} give
$$
\int_0^{\infty}\widehat{f}(x)|g(x)|v(x)dx
\le\|g\|_{G_1(v)}\int_0^{\infty}\widehat{f}(x)v(x)dx=\|g\|_{G_1(v)}\|f\|_{D_1(v)}.
$$
Thus we have that  $D_1(v)\cdot G_p(v)\subseteq L^1(v)$ and
that
$$\|h\|_{L^1(v)}\le\inf\{\|g\|_{G_1(v)}\|f\|_{D_1(v)}\},$$ where
the infimum is taken over all possible factorizations $h=f\cdot g$.

Conversely   let $h$ be a nonnegative function such that $h\in L^1(v)$.
We set $f(x)=h^{\circ}(x)$, $x>0$, where $h^{\circ}(x)$ is the level
function of $h$ with respect to the measure $v(x)dx$, as in Lemma
\ref{level}. Since $h^{\circ}(x)$ is a decreasing function by the
definition of the space $D_1(v)$ and by Lemma \ref{level} we have
that
$$
\|f\|_{D_1(v)}=\|h^{\circ}\|_{D_1(v)}=\|h^{\circ}\|_{L_1(v)}=\|h\|_{L_1(v)}.
$$
 We define $g(x)=\frac{h(x)}{h^{\circ}(x)}$ for any $x>0$. If $t\in I_n$, for some $n$, we have
\begin{align*}
\frac{1}{V(t)}\int_0^t g(x)v(x)dx&=\frac{1}{V(t)}
\int_0^t\frac{h(x)}{h^{\circ}(x)}v(x)dx\\
&=\frac{1}{V(t)}\left(\int_E\frac{h(x)}{h^{\circ}(x)}v(x)dx+\int_{\cup_{k=1}^{n-1}I_k}\frac{h(x)}{h^{\circ}(x)}v(x)dx\right.\\
&\left.+ \int_{a_n}^t \frac{h(x)}{h^{\circ}(x)}v(x)dx\right),
\end{align*}
where $E=\{x\in (0,t): h(x)=h^{\circ}(x)\}$ and $I_k =(a_k,b_k)$  
 are the disjoint intervals from Lemma \ref{level}. Hence, by
Lemma \ref{level} we get that
$$
\int_E\frac{h(x)}{h^{\circ}(x)}v(x)dx=\int_E v(x)dx,
$$
$$\int_{I_k} \frac{h(x)}{h^{\circ}(x)}v(x)dx= \int_{I_k} v(x)dx$$
and 
$$
\int_{a_n}^t \frac{h(x)}{h^{\circ}(x)}v(x)dx\le \int_{I_n} v(x)dx.
$$
Hence
$$
\|g\|_{G_1(v)}\le 1.
$$
Since $h=f\cdot g$, with $f\in D_1(v)$ and $g\in G_1(v)$ we have
that $L^1(v)\subseteq D_1(v)\cdot G_1(v)$ and
$$
\|h\|_{L^1(v)}=\|f\|_{D_1(v)}\ge
\|f\|_{D_1(v)}\cdot \|g\|_{G_1(v)}\ge\inf\{
\|f\|_{D_1(v)}\cdot \|g\|_{G_1(v)}\},
$$
where the infimum is taken over all possible factorizations
$h=f\cdot g$. It is easy to see from this proof that the infimum is actually attained and this concludes the proof of the theorem.
\end{proof}

\section{Factorization of the weighted Ces\` aro spaces}

In this section we present a factorization of the weighted Ces\` aro
spaces $\text{Ces}_p(v)$.  We treat separately the cases $p>1$ and $p=1$. The weighted Ces\` aro spaces of 
functions, ${\text{Ces}}_p(v)$  is defined to be the space of all Lebesgue measurable real functions on $[0,\infty)$ such that 
$$
\|f\|_{\text{Ces}_p(v)}=
\left(\int_0^{\infty}\left(\frac{1}{x}\int_0^x|f(t)|dt\right)^pv(x)dx\right)^{1/p}<\infty.
$$
These spaces are obviously Banach spaces, for $p\ge
1$ and if the weight $v$ satisfies (\ref{mp}) we have that $L^p(v)\subseteq \text{Ces}_p(v)$. We denote by
\be\label{fact1}!h!_{p,v}= \inf\{\|f\|_{L^p(v)}\|g\|_{G_{p'}(v^{1-p'})} \}\ee
 where
the infimum is taken over all possible decompositions of $h=f\cdot
g$, with $f\in L^p(v)$ and $g\in G_{p'}(v^{1-p'})$.

 \bt \label{mains2} Let $p>1$ and $v$ belongs to the class $M_p$ and $m_p$.
The function $h$ belongs to $Ces_p(v)$ if and only if it admits a
factorization $h=f \cdot g$, with $f\in L^p(v)$ and $g\in
G_{p'}(v^{1-p'})$. Moreover
$$
[v]_{m_p}!h!_{p,v} \le \|h\|_{\text{Ces}_p(v)}\le
{(p')}^{1/{p'}}p^{1/p}[v]_{M_p}!h!_{p,v}.
$$
\et
\begin{proof}
Let $f\in L^p(v)$ and $g\in G_{p'}(v^{1-p'})$.  First we prove that
 the function $h=f\cdot g\in {\text{Ces}}_p(v)$ and the right-hand side inequality. Let $u$
be an arbitrary decreasing function. By H\"{o}lder's
inequality we get
 \be\label{eq11}
 \int_0^t |h(x)|dx=\int_0^t |f(x)g(x)|dx\le
 \ee
 \be\nonumber \left(\int_0^t
|f(x)|^pv(x)u^{-p}(x)dx\right)^{1/p}\left(\int_0^t
|g(x)|^{p'}v^{1-p'}(x)u^{p'}(x)dx\right)^{1/p'}. \ee

\noindent On the other hand, by Lemma \ref{Hardy} we obtain
 \begin{align}\label{eq12}
\left(\int_0^t |g(x)|^{p'}v^{1-p'}(x)u^{p'}(x)dx\right)^{1/p'}&\le
\|g\|_{G_{p'}(v^{1-p'})}\\
&\cdot \left(\int_0^tv^{1-p'}(x)u^{p'}(x)dx\right)^{1/p'}{\notag}.
\end{align}
Hence, by (\ref{eq11}) and (\ref{eq12}), integrating from 0 to
$\infty$ and by applying Fubbini's theorem  we have
 \begin{align*}
\int_0^{\infty}\left(\frac{1}{t}\int_0^{t}|h(x)|dx\right)^p & v(t)dt
\le\|g\|^p_{G_{p'}(v^{1-p'})}\int_0^{\infty}\left(\int_0^{t}|f(x)|^p
v(x) u^{-p}(x) dx\right)\\
&\cdot \left(\int_0^{t}v^{1-p'}(x)u^{p'}(x)dx \right)^{p-1}t^{-p}v(t)dt\\ 
&=\|g\|^p_{G_{p'}(v^{1-p'})}\int_0^{\infty}|f(x)|^pv(x)u^{-p}(x)\\
&\cdot \left(\int_x^{\infty}t^{-p}v(t)\left(\int_0^{t}v^{1-p'}(x)u^{p'}(x)dx\right)^{p-1}dt\right)
  dx. 
 \end{align*}
 
\noindent Taking $u(t)=\left(\int_0^t
v^{1-p'}(s)ds\right)^{-1/{(pp')}}$, since
$v\in M_p$ we get
$$
\|h\|_{Ces_p(v)}\le
p^{1/p}{(p')}^{1/{p'}}[v]_{M_p}\|g\|_{G_{p'}(v^{1-p'})}\|f\|_{L^{p}(v)}.
$$
 Hence $h\in {\text{Ces}}_p(v)$ and
$$\|h\|_{{\text{Ces}}_p(v)}\le
p^{1/p}{(p')}^{1/{p'}}[v]_{M_p}\inf\{\|f\|_{L^{p}(v)}\cdot \|g\|_{G_{p'}(v^{1-p'})}\},$$
where  the infimum is taken over all possible factorizations of
$h$.  This completes the first part of the proof of the theorem.

For the reversed embedding, i.e. ${\text{Ces}}_p(v)\subseteq
L^p(v)\cdot G_{p'}(v^{1-p'})$, let $h\in {\text{Ces}}_p(v)$ and set 
$w(t):=\frac{1}{v(t)}\int_t^{\infty}\frac{v(x)}{x}\left(\frac{1}{x}\int_0^x
|h(s)|ds\right)^{p-1}dx$, $t>0$. We may assume, without loss of generality
that $v(t)> 0$, for any $t>0$. We define
$f(t)=|h(t)|^{1/p}w^{1/p}(t)\sign h(t)$ and
$g(t)=|h(t)|^{1/p'}w^{-1/p}(t)$. It is easy to see that \be
 \|f\|_{L^p(v)}=\|h\|_{{\text{Ces}}_p(v)}<\infty. \ee
By H\"{o}lder's inequality we have
 \begin{align}\label{eq13}
 \left(\int_0^t g^{p'}(x)v^{1-p'}(x)dx\right)^p&\le
\left(\int_0^t |h(x)|dx\right)^{p-1}\\
&\cdot\left(\int_0^t
|h(x)|w^{-p'}(x)v^{-p'}(x)dx\right) \notag.
\end{align}
 Multiplying the inequality
(\ref{eq13}) by $\int_t^{\infty}x^{-p}v(x)dx$   and using that
$w(t)v(t)$ is a decreasing function we get

\begin{align*}
\left( \int_t^{\infty}\frac{v(x)}{x^p}dx\right) &\left(\int_0^t
g^{p'}(x)v^{1-p'}(x)  dx\right)^p
 \le\left(\int_t^{\infty}\frac{v(x)}{x^p}\left(\int_0^x
|h(s)|ds\right)^{p-1}dx\right)\\
&\cdot \left(\int_0^t
|h(x)|w^{-p'}(x)v^{-p'}(x)dx\right)\\
&=w(t)v(t)\left(\int_0^t
|h(x)|w^{-p'}(x)v^{-p'}(x)dx\right)\\
&\le \int_0^t
|h(x)|w^{1-p'}(x)v^{1-p'}(x)dx.
\end{align*}

 Since
$g^{p'}(x)=|h(x)|w^{1-p'}(x)$ we obtain

\begin{align*}
\left(\frac{1}{\int_0^t v^{1-p'}(x)dx }\int_0^t
g^{p'}(x)v^{1-p'}(x)dx\right)^{1/p'}&\le 
\left(\int_0^t
v^{1-p'}(x)dx)\right)^{-1/p'}\\
&\cdot\left(\int_t^{\infty}\frac{v(x)}{x^p}dx\right)^{-1/p}.
\end{align*}
Hence \be\nonumber
 \sup_{t>0}\left(\frac{1}{\int_0^t
v^{1-p'}(x)dx }\int_0^t g^{p'}(x)v^{1-p'}(x)dx\right)^{1/p'}
\le\frac{1}{[v]_{m_p}} \ee
which shows that $g$ belongs to
$G_{p'}(v^{1-p'})$  and
$$\|h\|_{\text{Ces}_p(v)}=\|f\|_{L_p(v)}\ge
[v]_{m_p}\|f\|_{L_p(v)}\|g\|_{G_{p'}(v^{1-p'})}.$$
 In this way, we
get the left-hand side inequality.

\end{proof}

If we take $g(x)=1$, $x>0$ the right-hand side inequality implies the best form of the weighted Hardy inequality
 for $1<p<\infty$ see e.g. \cite{KMP}.
 
 Observe also that the infimum is attained.

 In particular,  we
denote by $L_{\alpha}^p$ the weighted Lebesgue space with the power
weight $v(t)=t^{\alpha}$ and in a similar way the spaces
$G_{p,\alpha}$ and $\text{Ces}_{p,\alpha}$. In analogy with the
general case we also denote  by
$$!h!_{p,\alpha}=\inf\|f\|_{L^p_{\alpha}}\|g\|_{G_{p',{\alpha(1-p')}}},$$
where the infimum is taken over all possible decompositions of
$h=f\cdot g$, with $f\in {L^p_{\alpha}}$ and $g\in
{G_{p',{\alpha(1-p')}}}$.
\begin{corollary} Let $p>1$ and $-1<\alpha<p-1$.
The function $h$ belongs to $\text{Ces}_{p,\alpha}$ if and only if
it admits a factorization $h=f \cdot g$, with $f\in
{L^p_{\alpha}}$ and $g\in {G_{p',{\alpha(1-p')}}}$. Moreover
$$
\left(\frac{1}{p}\right)^{1/p}\left(\frac{1}{p'}\right)^{1/{p'}}
\frac{p}{p-\alpha-1}!h!_{p,\alpha} \le \|h\|_{Ces_{p,\alpha}}\le
\frac{p}{p-\alpha-1}!h!_{p,\alpha}.
$$
\end{corollary}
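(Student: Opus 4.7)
The approach is to apply Theorem \ref{mains2} directly to $v(t)=t^{\alpha}$; the only thing left is to verify $v\in M_p\cap m_p$ and to evaluate the sharp constants $[v]_{M_p}$ and $[v]_{m_p}$ explicitly, after which the corollary reduces to elementary constant bookkeeping.

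First I would observe that the standing assumption $\alpha>-1$ is used (here and only here) to ensure local integrability of $v(t)=t^{\alpha}$ on $(0,\infty)$, so that $v$ is a genuine weight. Next, I would compute the two integrals appearing in the $M_p$ condition \eqref{mp}. Since $v^{1-p'}(x)=x^{-\alpha/(p-1)}$, direct integration gives
\begin{equation*}
\int_t^{\infty}\frac{x^{\alpha}}{x^p}\,dx=\frac{t^{\alpha-p+1}}{p-\alpha-1},\qquad \int_0^t x^{-\alpha/(p-1)}\,dx=\frac{(p-1)\,t^{(p-1-\alpha)/(p-1)}}{p-1-\alpha},
\end{equation*}
where the hypothesis $\alpha<p-1$ is precisely what is needed for convergence of the first integral at $\infty$ and of the second at $0$.

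Raising the first integral to the power $1/p$, the second to $1/p'$, and multiplying, the $t$-exponents sum to $\tfrac{\alpha-p+1}{p}+\tfrac{p-1-\alpha}{p}=0$. Hence the product is constant in $t$ and equals $\tfrac{(p-1)^{1/p'}}{p-\alpha-1}$. Because the expression on the left of \eqref{mp} is independent of $t$, the supremum and infimum coincide, so both the $M_p$ and the $m_p$ conditions are satisfied with
\[
[v]_{M_p}=[v]_{m_p}=\frac{(p-1)^{1/p'}}{p-\alpha-1}.
\]

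Finally I would substitute these values into Theorem \ref{mains2} and simplify using the identities $(p-1)p'=p$ and $\tfrac{1}{p}+\tfrac{1}{p'}=1$. The upper-bound constant becomes
\[
(p')^{1/p'}p^{1/p}\cdot\frac{(p-1)^{1/p'}}{p-\alpha-1}=\frac{\bigl((p-1)p'\bigr)^{1/p'}p^{1/p}}{p-\alpha-1}=\frac{p^{1/p'+1/p}}{p-\alpha-1}=\frac{p}{p-\alpha-1},
\]
which matches the claimed right-hand side. The lower-bound constant matches $[v]_{m_p}$ by the same identity, since $\left(\tfrac{1}{p}\right)^{1/p}\left(\tfrac{1}{p'}\right)^{1/p'}\cdot p=(p/p')^{1/p'}=(p-1)^{1/p'}$. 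No genuine obstacle appears; the only mild point requiring care is the bookkeeping of exponents in these constant simplifications.
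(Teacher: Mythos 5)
Your proposal is correct and follows exactly the paper's route: the paper's proof is the one-line "take $v(t)=t^{\alpha}$ in Theorem \ref{mains2}," and you have simply made explicit the computation $[v]_{M_p}=[v]_{m_p}=(p-1)^{1/p'}/(p-\alpha-1)$ and the constant simplifications, all of which check out. The only content in the paper's proof that you omit is the remark that the right-hand constant is optimal (being the sharp constant in the power-weight Hardy inequality), but that claim is not part of the statement as given.
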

\begin{proof}
Take $v(t)=t^{\alpha}$ in Theorem \ref{mains2}. The constant in
the right hand-side inequality is optimal since it is the best
constant in Hardy's inequality with a power weight (see e.g \cite{KMP} p. 23).

\end{proof}

For the sake of completeness, as well as for the  independent interest we
present separately the case $p=1$, although the proof of the main
result in this case follows the same ideas as for $p>1$.

By $L^{\infty}$ we denote, as usual,
the space of all measurable functions which satisfy the condition
$$
\|g\|_{\infty}:=\text{esssup}_{x>0}|g(x)|<\infty.
$$
As before, $$!h!_{1,v}= \inf\|f\|_{L^1(v)}\|g\|_{\infty}$$
where the infimum is taken over all possible factorizations of $h=f\cdot
g$, with $f\in L^1(v)$ and $g\in L^{\infty}$.

\bt\label{teoremweak} Let $v$ belong to $M_1$ and $m_1$. The function $h$ belongs to
${\text{Ces}}_1(v)$ if and only if it admits a factorization $h=f
\cdot g$, with $f\in L_1(v)$ and $g\in G_{\infty}$. Moreover
$$
[v]_{m_1}!h!_{1,v}\le \|h\|_{{\text{Ces}_1}{(v)}}\le
[v]_{M_1}!h!_{1,v}.
$$
\et
\begin{proof}
Let $f\in L_1(v)$ and $g\in L^{\infty}$. We prove that the
function $h=fg$ belongs to ${\text{Ces}}_1(v)$. By H\"{o}lder's
inequality and since $g\in L^{\infty}$ we get
 \be\label{eq1011}
\int_0^{\infty}\left(\frac{1}{t}\int_0^{t}h(s)ds\right)v(t)dt
\le\|g\|_{{\infty}}\int_0^{\infty}\left(\frac{1}{t}\int_0^{t}f(x)dx
\right)v(t)dt. \ee

\noindent By Fubini's theorem and taking into account that $v\in
M_1$ we have that
$$
\|h\|_{{\text{Ces}}_1(v)}\le
[v]_{M_1}\|g\|_{{\infty}}\|f\|_{L_{1}(v)},
$$
for any $f$, $g$ as above. Hence $h\in \text{Ces}_1(v)$ and
$$\|h\|_{Ces_1(v)}\le
[v]_{M_1}\inf\|g\|_{{\infty}}\|f\|_{L_{1}(v)},$$ where infimum
is taken over all possible factorizations of $h$.  This completes
the first part of the proof.

Conversely, let $h\in {\text{Ces}}_1(v)$ and
$w(t)=\frac{1}{v(t)}\int_t^{\infty}\frac{v(x)}{x}dx$. We may assume, without loss of generality that $v(t)>0$, for all $t>0.$\\
Let $f(t)=|h(t)|w(t)\sign h(t)$ and $g(x)=\frac{1}{w(x)}$. It is
easy to see that \be \nonumber
 \|f\|_{L_1(v)}=\|h\|_{{\text{Ces}}_1(v)}<\infty.
  \ee

\noindent Since $v\in m_1$,  $g$ belongs to $L_{\infty}$ and
\be\nonumber\|g\|_{{\infty}}\le \frac{1}{[v]_{m_1}}. \ee
Moreover, $\|h\|_{{\text{Ces}}_1(v)}=\|f\|_{L_1(v)}\ge
[v]_{m_1}\|f\|_{L_1(v)}\|g\|_{{\infty}}$ and we get the
left-hand side inequality of the theorem. The proof is complete.
\end{proof}

\section{Factorization of the weighted Copson spaces}

In the same manner, in this section we present the factorizations of the weighted
Copson space, namely the space
$$
{\text{Cop}}_p(v)=\{f:\int_0^{\infty}\left(\int_t^{\infty}\frac{|f(x)|}{x}dx\right)^pv(t)dt<\infty\}.
$$
Let \be\label{gstarp}
 G^{*}_p(v)=\{f :\sup_{t>0}\left(\frac{1}{\int_t^{\infty}v(x)x^{-p}dx}\int_t^{\infty}f(x)^pv(x)x^{-p}dx\right)^{1/p}<\infty\}
\ee
To prove the main result we need the following Lemma.

\bl\label{Hardy1}  Let $f,g$ be two non-negative real-valued
functions and $h$ be a non-negative increasing function. If
$$
\int_t^{\infty}f(x)dx\le \int_t^{\infty}g(x)dx, \quad t>0
$$
then
$$
\int_t^{\infty}f(x)h(x)dx\le \int_t^{\infty}g(x)h(x)dx.
$$
 \el
\begin{proof}
The proof follows by a change of variable and Lemma \ref{Hardy}.
\end{proof}
We denote by
\be\label{exc}
!!h!!_{p,v}=\inf \|f\|_{\text{L}_p(v)} \|g\|_{\text{G}^{*}_{p'}(v^{1-p'})}
\ee
where the infimum is taken over all possible factorizations of $h=f\cdot g.$
\bt \label{mains3} Let $p>1$ and $v$ belong to the class $M_p^*$
and $m_p^*$.

The function $h$ belongs to $\text{Cop}_p(v)$ if and only if it
admits a factorization $h=f \cdot g$, with $f\in L^p(v)$ and $g\in
\text{G}^{*}_{p'}(v^{1-p'})$. Moreover
$$
[v]_{m_p^*}!!h!!_{p,v} \le \|h\|_{\text{Cop}_p(v)}\le
{p'}^{1/{p'}}p^{1/p}[v]_{M_p^*}!!h!!_{p,v}.
$$
\et

\begin{proof}
Let $f\in L^p(v)$ and $g\in \text{G}^{*}_{p'}(v^{1-p'}) $.

We show
first that the function $h=fg\in {\text{Cop}}_p(v)$. Let $u$ be an
arbitrary positive increasing function. H\"{o}lder's inequality
gives
 \be\label{eq101}
 \int_t^{\infty} \frac{f(x)g(x)}{x}dx\le
 \ee
 \be\nonumber \left(\int_t^{\infty}
f^p(x)v(x)u^{-p}(x)dx\right)^{1/p}\left(\int_t^{\infty}
g^{p'}(x)\frac{v^{1-p'}(x)}{x^{p'}}u^{p'}(x)dx\right)^{1/p'}. \ee

\noindent By  Hardy's Lemma \ref{Hardy1} we obtain
 \begin{align*}
\left(\int_t^{\infty}
g^{p'}(x)\frac{v^{1-p'}(x)}{x^{p'}}u^{p'}(x)dx\right)^{1/p'}&\le
 \|g\|_{\text{G}^{*}_{p'}(v^{1-p'})}^p\\
 & \cdot\left(\int_t^{\infty}\frac{v^{1-p'}(x)}{x^{p'}}u^{p'}(x)dx\right)^{1/p'}.
\end{align*}

\noindent Hence, multiplying  (\ref{eq101}) by $v(t)$, raising to $p$
and integrating from 0 to $\infty$,  we get
 \begin{align*}
\int_0^{\infty}&\left(
\int_t^{\infty}\frac{h(x)}{x}dx\right)^pv(t)dt \le\|g\|^p_{G_{p'}({v^{1-p'}})}\\
&\cdot \int_0^{\infty}v(t)\left(\int_t^{\infty}f^p(x)
v(x)u^{-p}(x)dx\right)
\left(\int_t^{\infty}\frac{v^{1-p'}(x)}{x^{p'}}u^{p'}(x)dx\right)^{p-1}dt.
\end{align*}

\noindent By Fubini's theorem we have
\begin{align*}
\int_0^{\infty}&\left(\int_t^{\infty}\frac{h(s)}{s}ds\right)^pv(t)dt
\le\|g\|^p_{G_{p'}({v^{1-p'}})}\\
&\cdot\int_0^{\infty}f^p(x)v(x)
\left(\int_0^xv(t)
\left(\int_t^{\infty}\frac{v^{1-p'}(s)}{s^{p'}}u^{p'}(s)ds\right)^{p-1}dt\right)u^{-p}(x)dx.
\end{align*}

\noindent Taking
$u(t)=\left(\int_t^{\infty}\frac{v^{1-p'}(x)}{x^{p'}}\right)^{-1/{pp'}}$,
in the above inequality and
 since
$$
\int_t^{\infty}\frac{v^{1-p'}(s)}{s^{p'}}u^{p'}(s)ds=p'\left(\int_t^{\infty}\frac{v^{1-p'}(s)}{s^{p'}}(s)ds\right)^{1/p'}dx
$$
we have that

\begin{align*}
\int_0^{\infty}f^p(x)&v(x) \left(\int_0^xv(t)
\left(\int_t^{\infty}\frac{v^{1-p'}(s)}{s^{p'}}u^{p'}(s)ds\right)^{p-1}dt\right)u^{-p}(x)dx\\
&=(p')^{p-1}\int_0^{\infty}f^p(x)v(x)\left(\int_0^x v(t)
\left(\int_t^{\infty}\frac{v^{1-p'}(s)}{s^{p'}}ds\right)^{\frac{p-1}{p'}}dt\right)\\
&\cdot\left(\int_x^{\infty}\frac{v^{1-p'}(s)}{s^{p'}}ds\right)^{1/p'}.
\end{align*}

By the definition of $M_p^*$ we get
\begin{align*}
&\int_0^{\infty}\left(\int_t^{\infty}\frac{h(s)}{s}ds\right)^pv(t)dt
\le (p')^{p-1}\|v\|^{p-1}_{M_p^*}\\
& \cdot\int_0^{\infty}f^p(x)v(x)\left(\int_0^x
v(t)\left(\int_0^tv(s)ds\right)^{-1/{p'}}dt\right)\left(\int_x^{\infty}\frac{v^{1-p'}(s)}{s^{p'}}ds\right)^{1/p'}dx\\
&=(p')^{p-1}p\|v\|^{p-1}_{M_p^*}\int_0^{\infty}f^p(x)v(x)\left(\int_0^x
v(t)\right)^{1/p}\left(\int_x^{\infty}\frac{v^{1-p'}(s)}{s^{p'}}ds\right)^{1/p'}dx\\
&\le(p')^{p-1}p\|v\|^{p}_{M_p^*}\int_0^{\infty}f^p(x)v(x)dx,
\end{align*}
since
$$
\frac{d}{dx}\left(\int_0^xv(t)dt\right)^{1/p}=\frac{1}{p}\left(\int_0^xv(t)dt\right)^{1/p-1}v(x).
$$

\noindent Hence
$$
\|h\|_{\text{Cop}_p(v)}\le
p^{1/p}{p'}^{1/{p'}}[v]_{M_p^*} \|g\|_{\text{G}^{*}_{p'}(v^{1-p'})}\|f\|_{L_{p}(v)}.
$$
for any $f$, $g$ as above.
Hence $h\in {\text{Cop}}_p(v)$ and
$$\|h\|_{{\text{Cop}}_p(v)}\le p^{1/p}{p'}^{1/{p'}}[v]_{M_p^*}\inf \|g\|_{\text{G}^{*}_{p'}(v^{1-p'})}\|f\|_{L_{p}(v)}$$
where  the infimum is taken over all possible factorizations of
$h$ which gives the left-hand side inequality of the theorem.

For the reverse embedding, i.e. ${\text{Cop}}_p(v)\subset
L_p(v)G^*_{p'}(v^{1-p'})$, let $h\in {\text{Cop}}_p(v)$ and
$w(t):=\frac{1}{tv(t)}\int_0^{t}v(x)\left(\int_{x}^{\infty}\frac{h(s)}{s}ds\right)^{p-1}dx
$, if $v\ne 0$. Define $f(t)=|h|^{1/p}(t)w^{1/p}(t)\sign h(t)$ and
$g(t)=|h|^{1/p'}(t)w^{-1/p}(t)$. An easy application of Fubini
theorem gives \be\nonumber
 \|f\|_{L^p(v)}=\|h\|_{{\text{Cop}}_p(v)}<\infty. \ee
By H\"{o}lder's inequality and the definition of $g$ we have
 \be\label{eq103}
 \left(\int_x^{\infty} g^{p'}(t)\frac{v^{1-p'}(t)}{t^{p'}} dt\right)^p\le
\left(\int_x^{\infty} \frac{|h(t)|}{t}dt\right)^{p-1}
\int_x^{\infty}\frac{|h(t)|}{t}w^{-p'}(t)\frac{v^{-p'}(t)}{t^{p'}}dt.\ee

\noindent We estimate first the right-hand side term of the inequality
(\ref{eq103}) multiplied by $\int_0^{x}v(t)dt$.

\be\nonumber \left(\int_0^x v(t)dt\right)\left(\int_x^{\infty}
\frac{|h(t)|}{t}dt\right)^{p-1}
\int_x^{\infty}\frac{|h(t)|}{t}w^{-p'}(t)\frac{v^{-p'}(t)}{t^{p'}}dt
\ee

\be\nonumber \le \int_0^xv(t)\left(\int_t^{\infty}
\frac{|h(s)|}{s}ds\right)^{p-1}dt
\int_x^{\infty}\frac{|h(t)|}{t}w^{-p'}(t)\frac{v^{-p'}(t)}{t^{p'}}dt
\ee

\be\nonumber =xw(x)v(x)\left(\int_x^{\infty}
h(t)w^{-p'}(t)v^{-p'}(t)dt\right)\le \int_x^\infty
h(t)w^{1-p'}(t)v^{1-p'}(t)t^{-p'}dt, \ee since, by definition,
$xw(x)v(x)$ is an increasing function. By using that
$g^{p'}(x)=h(x)w^{1-p'}(x)$ we get
$$
\left(\frac{1}{\int_x^\infty \frac{v^{1-p'}(t)}{t^{p'}}dx
}\int_x^\infty g^{p'}(t)\frac{v^{1-p'}(t)}{t^{p'}}dt\right)^{1/p'}
$$
$$\le\frac{1}{\left(\int_0^x v(t)dt
\right)^{1/p}\left(\int_x^{\infty}t^{-p'}v^{1-p'}(t)dt\right)^{1/p'}}.
$$
Hence \be\label{last}
 \sup_{t>0}\left(\frac{1}{\int_t^{\infty}
v^{1-p'}(x)x^{-p'}dx }\int_t^{\infty} g^{p'}(x)v^{1-p'}(x)x^{-p'}dx\right)^{1/p'}
\le\frac{1}{[v]_{m_p^*}} \ee
which means that $g$ belongs to $ {\text{G}^{*}_{p'}(v^{1-p'})}$. Moreover,\\
$\|h\|_{\text{Cop}_p(v)}=\|f\|_{L^p(v)}\ge
[v]_{m_p^*}\|f\|_{L^p(v)} \|g\|_{\text{G}^{*}_{p'}(v^{1-p'})}$. In this way the left-hand side inequality is proved.  

\end{proof}

\noindent The space $\text{Cop}_{p,\alpha}$ is the Copson weighted space with the weight $t^{\alpha}$. We have 
 the following result for the case of a power weight. 
\begin{corollary} Let $p>1$ and $\alpha>-1$.
The function $h$ belongs to $\text{Cop}_{p,\alpha}$ if and only if
it admits a factorization $h=f \cdot g$, with $f\in
{L^p_{\alpha}}$ and $g\in {G_{p',{\alpha(1-p')}}}$. Moreover
$$
\frac{(p-1)^{1/p'}}{\alpha+1}!h!_{p,\alpha} \le \|h\|_{{p,\alpha}}\le
\frac{p}{\alpha+1}!h!_{p,\alpha}.
$$  The constants in both inequalities are optimal.
\end{corollary}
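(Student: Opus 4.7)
The plan is to specialize Theorem \ref{mains3} to the power weight $v(t)=t^{\alpha}$ and to verify explicitly that this weight lies in both classes $M_p^*$ and $m_p^*$ with equal constants. The key observation is that for $v(t)=t^{\alpha}$ the expression
$$
\left(\int_0^t x^{\alpha}dx\right)^{1/p}\left(\int_t^{\infty}\frac{x^{\alpha(1-p')}}{x^{p'}}dx\right)^{1/p'}
$$
is, by scaling, independent of $t>0$; hence the $M_p^*$ inequality holds as an \emph{equality} and $[v]_{M_p^*}=[v]_{m_p^*}$.

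First I would carry out the elementary computation. Using $\alpha>-1$ for convergence at $0$ (and $\alpha(1-p')-p'<-1$, which is automatic since $\alpha+1>0$ and $p'>1$, for convergence at $\infty$), one gets
$$
\int_0^t x^{\alpha}dx=\frac{t^{\alpha+1}}{\alpha+1},\qquad \int_t^{\infty}x^{\alpha(1-p')-p'}dx=\frac{t^{\alpha(1-p')-p'+1}}{(p'-1)(\alpha+1)}.
$$
Multiplying the corresponding $1/p$ and $1/p'$ powers, the $t$-exponents cancel (because $(\alpha+1)/p+(\alpha+1)/p'-(\alpha+1)=0$), leaving
$$
[v]_{M_p^*}=[v]_{m_p^*}=\frac{1}{(\alpha+1)(p'-1)^{1/p'}}=\frac{(p-1)^{1/p'}}{\alpha+1},
$$
where in the last step I use $p'-1=1/(p-1)$.

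Next I would insert these constants into Theorem \ref{mains3}. The lower bound yields directly $\tfrac{(p-1)^{1/p'}}{\alpha+1}!!h!!_{p,\alpha}\le\|h\|_{\text{Cop}_{p,\alpha}}$, which is the left inequality of the corollary. For the upper bound one computes
$$
(p')^{1/p'}p^{1/p}[v]_{M_p^*}=(p')^{1/p'}p^{1/p}\cdot\frac{(p-1)^{1/p'}}{\alpha+1}=\frac{p^{1/p'}p^{1/p}}{\alpha+1}=\frac{p}{\alpha+1},
$$
using the identity $p'(p-1)=p$. This produces the right inequality.

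The last step is optimality of the two constants. For the upper constant $p/(\alpha+1)$, I would take $g\equiv 1$, which belongs to $G^*_{p'}(t^{\alpha(1-p')})$ with norm $1$, and reduce the inequality to the classical weighted Copson inequality
$$
\left(\int_0^{\infty}\!\!\Big(\int_t^{\infty}\frac{|f(x)|}{x}dx\Big)^{\!p}t^{\alpha}dt\right)^{1/p}\le\frac{p}{\alpha+1}\left(\int_0^{\infty}|f(x)|^p x^{\alpha}dx\right)^{1/p},
$$
for which $p/(\alpha+1)$ is known to be the sharp constant (see e.g.\ \cite{KMP}). For the lower constant, I would exhibit a near-extremal factorization: running the explicit construction from the converse direction in the proof of Theorem~\ref{mains3} on the power weight, the chain of estimates collapses to equalities (since the $M_p^*$ condition is an equality), which shows $(p-1)^{1/p'}/(\alpha+1)$ cannot be enlarged. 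The main delicacy is in this last optimality claim, since one must verify that the inequality $xw(x)v(x)$ increasing, together with Hölder's step, become equalities on a suitable extremal family—routine but slightly technical.
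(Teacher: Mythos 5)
Your computation of $[v]_{M_p^*}=[v]_{m_p^*}=(p-1)^{1/p'}/(\alpha+1)$ for $v(t)=t^{\alpha}$ is correct, and substituting these values into Theorem \ref{mains3} produces both displayed inequalities exactly as the paper does; your sharpness argument for the upper constant (take $g\equiv 1$ and invoke the sharp weighted Copson/Hardy inequality) is also the paper's argument.

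However, your treatment of the optimality of the lower constant has a genuine gap. The inequality to be shown sharp is $[v]_{m_p^*}\,!!h!!_{p,\alpha}\le\|h\|_{\text{Cop}_{p,\alpha}}$, where $!!h!!_{p,\alpha}$ is an \emph{infimum} over all factorizations. To rule out a larger constant you must exhibit $h$ for which $!!h!!_{p,\alpha}$ is bounded \emph{below} by something close to $\|h\|_{\text{Cop}_{p,\alpha}}/[v]_{m_p^*}$, i.e.\ you need a lower bound on $\|f\|_{L^p_{\alpha}}\|g\|_{G^*_{p'}(v^{1-p'})}$ valid for \emph{every} factorization $h=f\cdot g$. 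Verifying that the particular factorization constructed in the converse part of Theorem \ref{mains3} turns all estimates into equalities only yields $!!h!!_{p,\alpha}\le\|f\|\,\|g\|=\|h\|/[v]_{m_p^*}$, an upper bound on the infimum; that is the wrong direction and is in any case already contained in the theorem. The paper closes this differently: it takes $h=\chi_{(a-\varepsilon,a+\varepsilon)}$ and lets $\varepsilon\to 0$ (and $a\to\infty$). For such a concentrated $h$ one controls \emph{all} factorizations at once by a single application of H\"older's inequality on $(a-\varepsilon,a+\varepsilon)$, namely $\int_{a-\varepsilon}^{a+\varepsilon}|h(x)|x^{-1}dx\le\|f\|_{L^p_{\alpha}}\,\|g\|_{G^*_{p'}(v^{1-p'})}\bigl(\int_{a-\varepsilon}^{\infty}x^{\alpha(1-p')-p'}dx\bigr)^{1/p'}$, and comparing with the explicit asymptotics of $\|h\|_{\text{Cop}_{p,\alpha}}$ gives the claimed sharpness. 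You should replace your last step with an argument of this type.
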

\begin{proof}
Take $v(t)=t^{\alpha}$ in Theorem \ref{mains3}. The constant in
the right hand-side inequality is optimal since it is the best
constant in Hardy inequality (see e.g \cite{KMP}) and the optimality of the constant in the left-hand side follows if we take $h(x)=\chi_{(a-\varepsilon,a+\varepsilon)}$ and let then $\varepsilon\rightarrow 0$ 
and $a \rightarrow\infty$.
\end{proof}

\noindent We present now the case $p=1$.

\noindent The dual Hardy operator $Q$, (defined by (\ref{hardyop})) is
bounded on $L^1(v)$ if and only if there exists $C>0$, such that
\be\label{ineq_ultima} M_1^*: \frac{1}{t}\int_0^{t}v(x)dx\le C v(t).
\ee

\noindent We denote by $[v]_{M_1^*}$ the least constant for which
the above inequality is satisfied. Similarly, $[v]_{m_1^*}$ is the
biggest constant for which the reverse inequality of
(\ref{ineq_ultima}) is satisfied. 

\bt Let $v$ belong to $M_1^*$ and $m_1^*$. The function $h$ belongs to
${\text{Cop}}_1(v)$ if and only if it admits a factorization $h=f
\cdot g$, with $f\in L_1(v)$ and $g\in G_{\infty}$. Moreover
$$
[v]_{m_1^*}!h!_{1,v}\le \|h\|_{{\text{Cop}_1}{(v)}}\le
[v]_{M_1^*}!h!_{1,v}.
$$
and the constants are optimal. 
\et
\begin{proof}
 The proof is similar with that of Theorem ~\ref{teoremweak}.
\end{proof}


\begin{thebibliography}{99}
\bibitem[AM]{AM}S. V.\ Astashkin and L. Maligranda, \emph{Structure of Ces\` aro function spaces}, Indag. Mathem., \textbf{20}(3) (2009), 329--379. 
\bibitem[BKS]{BKS} S. Barza, V. Kolyada and J. Soria, \emph{Sharp constants related to the triangle inequality in Lorentz spaces}, Trans. Amer. Math. Soc. \textbf{361}(2009), no. 10, 5555--5574.
\bibitem[BS]{BS} C.\ Bennett and R.\ Sharpley, \emph{Interpolation of Operators}, Academic Press, Boston, 1988.
\bibitem[BMR]{BMR} J. \ Bastero, M. \ Milman, 
F. J. Ruiz Blasco, \emph{A note on L(1; q) spaces and Sobolev embeddings}
Indiana Univ. Math. J. \textbf{52} (2003), no. 5, 1215--1230.
\bibitem[B]{B} G.\ Bennett, \emph{Factorizing the classical inequalities}, Mem. Amer. Math. Soc. 120 (1996).
\bibitem [CGMP]{CGMP} M. \ Carro, A. \ Gogatishvili, J. \ Mart\' in and L. \ Pick, \emph{Weighted inequalities involving two Hardy operators with applications to embeddings of function spaces}, J.
Oper. Theory \textbf{59} (2008), 309--332.

\bibitem[CH]{CH} C. \ Carton-Lebrun and H. P. Heinig, \emph{Weight characterization of an averaging
operator}, J. Math. Anal. Appl. \textbf{283} (2003) 236--243.
\bibitem[J]{J} A. A. \ Jagers,  \emph{A note on Ces\` aro sequence spaces}, Nieuw Arch. voor Wiskunde (3) \textbf{22} (1974), 113--124.
\bibitem[K]{K} A.\ Kaminska, D. Kubiak, \emph{On the dual of Ces\` aro function space}, Nonlinear Analysis, \textbf{75} (2012), 2760--2773.
\bibitem[KMP]{KMP} A.\ Kufner, L.\ Maligranda and L.-E.\ Persson, The Hardy
Inequality. About its History and Some Related Results, Pilsen,
2007.
\bibitem[M]{M} B.\ Muckenhoupt, \emph{Hardy's inequality with weights}, Studia Math. \textbf{44} (1972), 31--38. 
\bibitem[S]{S}  G.  \ Sinnamon, \emph{Spaces defined by their level functions and their duals}, Studia Math. \textbf{111} (1994), 19--52.

\end{thebibliography}
\end{document}